\documentclass[oneside,english]{amsart}
\usepackage[T1]{fontenc}
\usepackage[latin9]{inputenc}
\usepackage{units}
\usepackage{mathrsfs}
\usepackage{mathtools}
\usepackage{amsbsy}
\usepackage{amstext}
\usepackage{amsthm}
\usepackage{amssymb}
\usepackage{stmaryrd}
\usepackage{stackrel}
\usepackage{graphicx}
\usepackage{geometry}
\usepackage{array} 
\makeatletter

\newcommand{\noun}[1]{\textsc{#1}}
\providecommand{\tabularnewline}{\\}

\numberwithin{equation}{section}
\numberwithin{figure}{section}
\theoremstyle{plain}
\newtheorem{thm}{\protect\theoremname}
\newtheorem{prop}[thm]{\protect\propositionname}
\theoremstyle{definition}
\newtheorem{defn}[thm]{\protect\definitionname}
\newtheorem{rem}[thm]{\protect\remarkname}

\usepackage{babel}
\usepackage{bbold}
\usepackage[all]{xy}

\makeatother

\usepackage{babel}
\providecommand{\definitionname}{Definition}
\providecommand{\propositionname}{Proposition}
\providecommand{\remarkname}{Remark}
\providecommand{\theoremname}{Theorem}

\begin{document}
\title{\noun{Integral elements of Okubo algebra and the E8-lattice}}
\author{Daniele Corradetti}
\address{Grupo de F\'isica Matem\'atica\\
Instituto Superior T\'ecnico\\
Av. Rovisco Pais, 1049-001 Lisboa, Portugal}
\email{danielecorradetti@tecnico.ulisboa.pt}
\begin{abstract}
In this work we study the interplay between the Coxeter-Dickson $E_{8}$-order,
the para-octonions, and the real Okubo algebra. We first explain why a usual
rotation presentation of the order-three automorphism, although algebraically
correct, does not preserve the classical integral orders. We then introduce an
integral order-three automorphism and prove that its Petersson isotope is the
compact real Okubo algebra. The four octonionic orders with lattices $C_{8}$,
$A_{2}^{4}$, $D_{4}^{2}$, and $E_{8}$ are preserved by the new realisation of the Okubo product. In
particular, the same Coxeter-Dickson $E_{8}$ lattice supports octonionic,
para-octonionic, and Okubo integral structures, although the resulting
multiplicative systems are not isomorphic.
\end{abstract}

\maketitle
\tableofcontents{}

\section*{\noun{introduction and motivations}}

Integral numbers in division algebras are a classical and fascinating topic at
the \emph{interplay} of algebra, arithmetic, and exceptional geometry. Already
in the XIX century, Gauss \cite{Gau} and Hurwitz \cite{Hu19} extended the notion
of integers to the complex numbers and the quaternions, identifying the Gaussian
integers and the Hurwitz quaternions as natural integral structures. The culmination
of this classical line of research is the \emph{Coxeter-Dickson octonions}
$\mathbb{O}_{E_{8}}$ \cite{Co46,Di23}, the maximal order in the algebra of the
octonions $\mathbb{O}$, which is isometric to the exceptional lattice $E_{8}$.
This octonionic background is also close to modern exceptional geometry and to
the systematic study of octonionic Rosenfeld spaces
\cite{MCCAI23-Rosenfeld}.

The landscape of eight-dimensional real division algebras, however, extends well
beyond the octonions. As a consequence of the Generalised Hurwitz Theorem
\cite{ElDuque Comp}, among the eight-dimensional composition algebras over
$\mathbb{R}$ there are three non-isomorphic division cases: the \emph{octonions}
$\mathbb{O}$ (unital), the \emph{para-octonions} $p\mathbb{O}$ (para-unital), and
the \emph{real Okubo algebra} $\mathcal{O}$ (non-unital). While octonions have a
unit element and para-octonions have a para-unit, the Okubo algebra merely contains
\emph{idempotent} elements, and its automorphism group is $\mathrm{SU}(3)$ rather
than $\mathrm{G}_{2}$. A synoptic comparison of these algebras is given in
Table~\ref{tab:Synoptic-table-of}.
This non-unital direction has recently appeared in the study of Okubo projective
geometry, Okubo Spin groups, minimal Cayley-plane realisations, collineation
groups of octonionic planes, and possible physical uses of the Okubo algebra
\cite{CZ22-OkuboSpin,CMZ24-MinimalCayley,CMZ25-Collineations,MCZ25-OkuboPhysics}.

The question of integral elements for $p\mathbb{O}$ and $\mathcal{O}$ has, to
our knowledge, not been systematically separated from the classical unital case.
The definition due to Dickson \cite{Di23} and refined by Johnson
\cite{Jo13,Jo17,Jo18} requires the presence of a unit element, and therefore
does not directly apply to non-unital algebras such as $p\mathbb{O}$ and
$\mathcal{O}$. In this work we isolate the part of Johnson's definition that
survives for isotopes: closure under the relevant product, integrality of norm
and trace, and the relation with a discrete lattice or module.
For complementary viewpoints on eight-dimensional composition algebras, the
Cayley plane, and their recovery from geometric algebra, see also
\cite{Corr26-8DComposition,Corr24-3DGA}.

The para-octonionic case gives a clean result. Since the Coxeter-Dickson order
$\mathbb{O}_{E_{8}}$ is closed under octonionic conjugation and multiplication,
it is also closed under $x\bullet y=\overline{x}\cdot\overline{y}$; hence the
same $E_{8}$ lattice supports a $\mathbb{Z}$-integral para-octonionic structure.
This is a perfect analogy with the octonionic case.

The Okubo case is subtler. A common Petersson presentation uses a rotation by
$2\pi/3$ and consequently displays coefficients in $\mathbb{Q}(\sqrt{3})$.
That automorphism produces the real Okubo algebra, but it does not preserve the
standard octonionic orders. The obstruction is therefore arithmetic rather
than algebraic. We replace this rotation by an integral signed permutation
$\tau$ of the Fano basis. Its fixed subalgebra is quaternionic, so the
classification of order-three Petersson algebras shows that the resulting
isotope is again the compact real Okubo algebra. At the same time, $\tau$
preserves all four crystallographic octonionic orders considered here.

The main consequence is simple: the Coxeter-Dickson order itself is closed
under the new Okubo product. Thus the octonionic and Okubo constructions have
exactly the same additive $E_{8}$ lattice, the same norm, and the same $240$
vectors of norm one. Their products are nevertheless different: the
octonionic roots form a Moufang loop, whereas the Okubo roots form a finite
quasigroup with distinguished idempotents.

The work is organised as follows. In Sec.~1 we review the classical notion of
integral elements over division algebras following Dickson and Johnson. In
Sec.~2 we review the three algebras $\mathbb{O}$, $p\mathbb{O}$, and
$\mathcal{O}$ and their interplays. In Sec.~3 we exhibit the failure of the old
rotation, present the integral automorphism, and prove preservation of the four
integral systems. In Sec.~4 we compare the octonionic and Okubo structures on
the common $E_{8}$ lattice. Finally, in Sec.~5 we discuss our findings and
outline future directions.

\section{\noun{what is an integer }}

Modern research on integral numbers started in the XIX century with
Gauss, Kronecker and Frege, culminating in the works of Dedekind,
Peano \cite{De88,Pe89} on the axiomatization of natural and integer
numbers. On the other hand, attempts to generalise the notion of integers
in the realm of division algebras such as $\mathbb{C}$ complex numbers
and $\mathbb{H}$ quaternions were done by Gauss\cite{Gau}, Hamilton
and Hurwitz \cite{Hu19}. From that chain of research the definition
that settled in the scientific community was that of Leonard Dickson
in 1923 \cite[p. 141-142]{Di23} which we briefly review here. Given
a composition algebra $\mathbb{A}$, i.e. an algebra with a norm $n$
such that $n\left(x\cdot y\right)=n\left(x\right)n\left(y\right)$,
endowed with an involution $x\longrightarrow\overline{x}$ called
\emph{conjugation}, then a set $I$ of \emph{integral elements} of
$\mathbb{A}$ is a set that is closed under addition and multiplication
that contains the element $1$ and such that for every element $x\in I$
we have 
\begin{align}
tr\left(x\right)=x+\overline{x} & \in\mathbb{Z},\label{eq:trace1}\\
n\left(x\right)=x\overline{x} & \in\mathbb{Z}.\label{eq:norm1}
\end{align}
Although this definition is broad and widely accepted, it has two
notable shortcomings. First, it can easily yield an infinite number
of integral elements over Hurwitz division algebras, encompassing
the real numbers $\mathbb{R}$ real numbers, the $\mathbb{C}$ complex
numbers, the $\mathbb{H}$ quaternions and the $\mathbb{O}$ octonions.
For example, for any natural number $m\in\mathbb{N}$, the subring
$\mathbb{Z}\left[\sqrt{-m}\right]$ is a set of integral elements
of $\mathbb{C}$. Second, its specificity -requiring the presence
of the unit element- automatically excludes integral systems from
non-unital algebras, such as para-octonions $p\mathbb{O}$ and Okubo
algebra $\mathcal{O}$.

The first issue with Dickson's definition was addressed by Johnson
\cite{Jo13,Jo17,Jo18}. Inspired by the work of Coxeter \cite{Co46},
Johnson proposed that integral systems should inherently relate to
lattices and, by extension, to crystallographic root systems. He then
arrived to the following definition
\begin{defn}
\label{def:A-basic-systemJohnson}A \emph{basic system of integral
elements} over a real algebra is given by a set of elements such that:
\begin{enumerate}
\item The trace and the norm are integers. Specifically,
\begin{align*}
tr\left(x\right) & =x+\overline{x}\in\mathbb{Z},\\
n\left(x\right) & =x\overline{x}\in\mathbb{Z},
\end{align*}
 and where the conjugation is given by an involution of the algebra.
\item The elements form a subring of the algebra, closed under multiplication,
and comprising a set of invertible unit elements.
\item The elements span a two-, four-, or eight-dimensional lattice embedded
in $\mathbb{R},\mathbb{C},\mathbb{H}$ or $\mathbb{O}$.
\end{enumerate}
\end{defn}

Taking into account Johnson's refinements, the number of viable sets
of integral elements dramatically diminishes, as detailed in Table
\ref{tab:Johnson Integers}).
\begin{table}
\centering{}%
\begin{tabular}{|c|c|c|c|c|c|}
\hline 
\textbf{Name} & \textbf{Alg.} & \textbf{Dim.} & \textbf{Symbol} & \textbf{Unit El.} & \textbf{Lattice}\tabularnewline
\hline 
\hline 
Integers & $\mathbb{R}$ & 1 & $\mathbb{Z}$ & 2 & $A_{1}$\tabularnewline
\hline 
Eisenstein & $\mathbb{C}$ & 2 & $\mathbb{C}_{A_{2}}$ & 3 & $A_{2}$\tabularnewline
\hline 
Gaussian & $\mathbb{C}$ & 2 & $\mathbb{C}_{C_{2}}$ & 4 & $C_{2}$\tabularnewline
\hline 
Hamilton & $\mathbb{H}$ & 4 & $\mathbb{H}_{2C_{2}}$ & 8 & $C_{2}\oplus C_{2}$\tabularnewline
\hline 
Hybrid & $\mathbb{H}$ & 4 & $\mathbb{H}_{2A_{2}}$ & 12 & $A_{2}\oplus A_{2}$\tabularnewline
\hline 
Hurwitz & $\mathbb{H}$ & 4 & $\mathbb{H}_{D_{4}}$ & 24 & $D_{4}$\tabularnewline
\hline 
Cayley-Graves & $\mathbb{O}$ & 8 & $\mathbb{O}_{C_{8}}$ & 16 & $C_{8}$\tabularnewline
\hline 
Comp. Eisenstein & $\mathbb{O}$ & 8 & $\mathbb{O}_{4A_{2}}$ & 24 & $A_{2}\oplus A_{2}\oplus A_{2}\oplus A_{2}$\tabularnewline
\hline 
Coupled Hurwitz & $\mathbb{O}$ & 8 & $\mathbb{O}_{2D_{4}}$ & 48 & $D_{4}\oplus D_{4}$\tabularnewline
\hline 
Coxeter-Dickson & $\mathbb{O}$ & 8 & $\mathbb{O}_{E_{8}}$ & 240 & $E_{8}$\tabularnewline
\hline 
\end{tabular}\caption{\label{tab:Johnson Integers}Summary of all crystallographic sets
of integer elements over division Hurwitz algebra $\mathbb{R},\mathbb{C},\mathbb{H}$
and $\mathbb{O}$. In the first column we indicated the name according
to \cite{Jo13}; then the related algebra in which the integral set
is embedded; the dimension of the algebra; the notational symbol we
introduced; the number of invertible unit elements; their algebraic
structure as abelian group (real and complex case), non-abelian group
(quaternionic case) and as Moufang loop (octonionic case); finally,
in the last column the lattice associated with the integral set.}
\end{table}

\subsection{Complex integral sets}

In order to appreciate the algebraic richness of the octonionic case,
it is instructive to begin with the simpler complex setting. The complex
plane $\mathbb{C}$ admits two classical \emph{crystallographic integral
sets}, each corresponding to a different sublattice of the Gaussian
plane.

The \emph{Gaussian integers} $\mathbb{Z}[i]=\left\{ a+bi:a,b\in\mathbb{Z}\right\} $
form a ring under standard complex multiplication, with norm $n(a+bi)=a^{2}+b^{2}\in\mathbb{Z}$
and trace $tr(a+bi)=2a\in\mathbb{Z}$. The four invertible unit elements
are $\left\{ +1,-1,+i,-i\right\} $, forming the cyclic group $\mathbb{Z}_{4}$.
Their lattice is isometric to $C_{2}$, the square lattice.

The \emph{Eisenstein integers} $\mathbb{Z}[\omega]$, where $\omega=e^{2\pi i/3}=\frac{-1+i\sqrt{3}}{2}$,
constitute a second integral system. Here the norm is $n(a+b\omega)=a^{2}-ab+b^{2}\in\mathbb{Z}$
and the trace is $tr(a+b\omega)=2a-b\in\mathbb{Z}$. The six unit
elements are $\left\{ \pm1,\pm\omega,\pm\omega^{2}\right\} $, forming
the cyclic group $\mathbb{Z}_{6}$, and their lattice is the triangular
lattice $A_{2}$.

\begin{figure}
\begin{centering}
\includegraphics[width=0.85\textwidth]{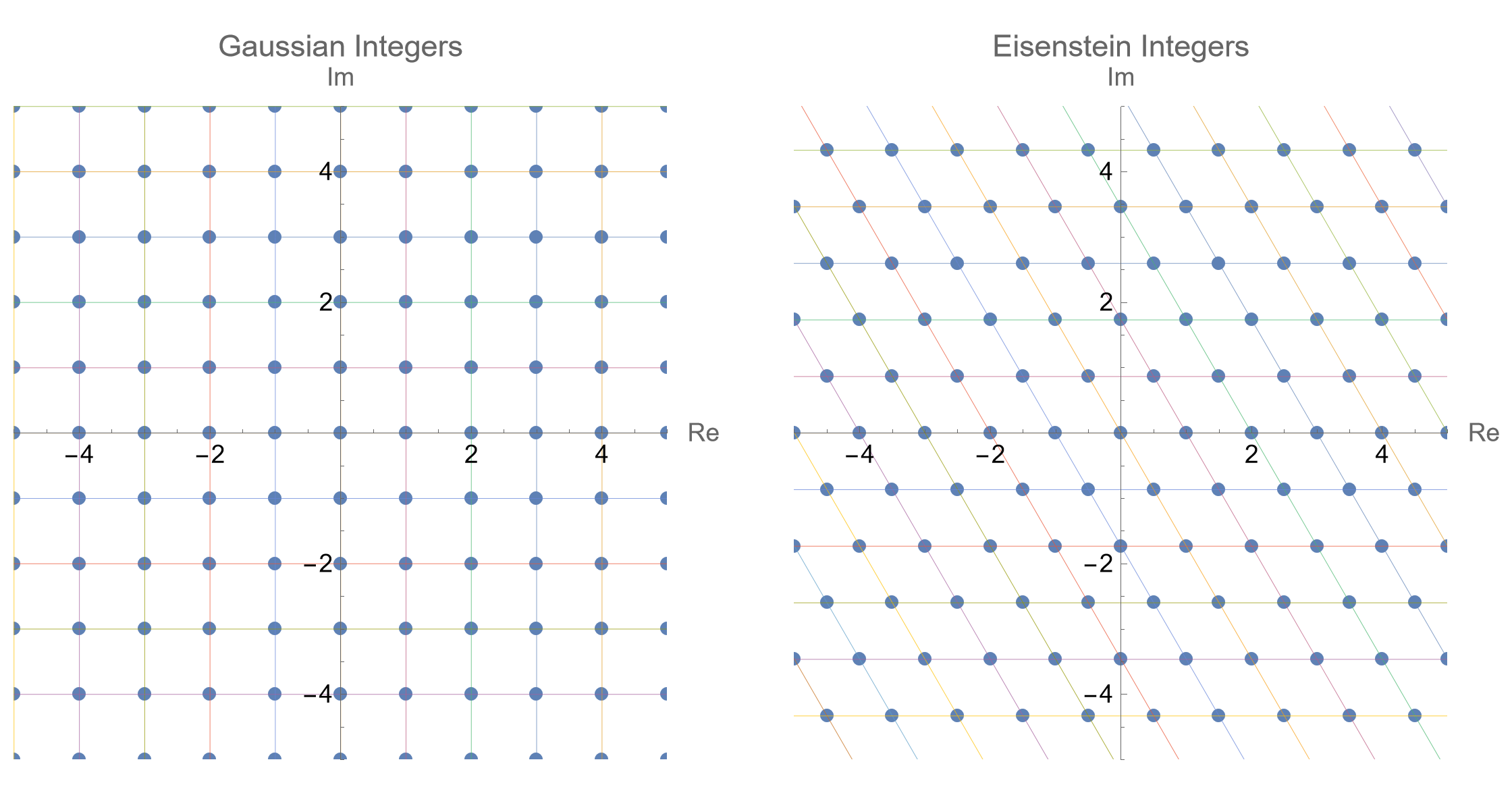}
\par\end{centering}
\caption{\emph{\label{fig:ComplexIntegers}On the left}: the Gaussian integers
$\mathbb{Z}[i]$, forming a square lattice $C_{2}$ in the complex
plane with four unit elements $\left\{ \pm1,\pm i\right\} $. \emph{On
the right}: the Eisenstein integers $\mathbb{Z}[\omega]$, forming
a triangular lattice $A_{2}$ with six unit elements $\left\{ \pm1,\pm\omega,\pm\omega^{2}\right\} $.}
\end{figure}

\begin{rem}
\label{rem:sqrt3-Eisenstein}It is worth noting that $\sqrt{3}$ appears
in the Eisenstein case through the imaginary part of $\omega$. This
is not merely a numerical coincidence: it reflects the underlying $A_{2}$
symmetry of the hexagonal lattice. The same coefficient appears in a useful
rotation presentation of the Okubo construction. As we shall see, however, it
belongs to that presentation and is not an intrinsic obstruction to integral
Okubo orders over $\mathbb{Z}$.
\end{rem}

\subsection{Quaternionic integral sets}

Moving to dimension four, the quaternions $\mathbb{H}$ admit three
crystallographic integral sets, reflecting a richer algebraic structure.

The \emph{Hamilton integers} $\mathbb{H}_{2C_{2}}$ consist of all
quaternions $a_{0}+a_{1}\mathrm{i}+a_{2}\mathrm{j}+a_{3}\mathrm{k}$
with $a_{0},a_{1},a_{2},a_{3}\in\mathbb{Z}$. Their eight unit elements
$\left\{ \pm1,\pm\mathrm{i},\pm\mathrm{j},\pm\mathrm{k}\right\} $
form the quaternion group $Q_{8}$, and the lattice is $C_{2}\oplus C_{2}$.

The \emph{Hurwitz integers} $\mathbb{H}_{D_{4}}$ are the integral
closure of $\mathbb{H}_{2C_{2}}$ in $\mathbb{H}$: they consist of
all quaternions of the form $a_{0}+a_{1}\mathrm{i}+a_{2}\mathrm{j}+a_{3}\mathrm{k}$
where either all $a_{i}\in\mathbb{Z}$ or all $a_{i}\in\mathbb{Z}+\frac{1}{2}$.
They possess 24 unit elements forming the \emph{binary tetrahedral
group} $2T\cong\mathrm{SL}(2,3)$, whose underlying lattice is $D_{4}$.
The 24 unit elements correspond to the vertices of the 24-cell, the
unique regular polytope in dimension four which is its own dual. In
analogy to the octonionic case, the Hurwitz integers \emph{enjoy}
the property of being the maximal order in $\mathbb{H}$ over $\mathbb{Z}$.

The \emph{Hybrid integers} $\mathbb{H}_{2A_{2}}$ are generated by
combining two Eisenstein systems within $\mathbb{H}$ and possess 12
unit elements. Their lattice is $A_{2}\oplus A_{2}$.

\begin{rem}
\label{rem:Hurwitz-maximal}The Hurwitz integers $\mathbb{H}_{D_{4}}$
are, up to isomorphism, the unique maximal order in $\mathbb{H}$ over
$\mathbb{Z}$. This maximality is the quaternionic analogue of the
Coxeter-Dickson octonions being the maximal order in $\mathbb{O}$,
a theme that recurs, with important modifications, in the Okubo setting
where the product is changed while the coefficient ring remains $\mathbb{Z}$.
\end{rem}

\subsection{Octonionic integral sets}

The octonionic case is considerably richer than its lower-dimensional
counterparts. In analogy to the quaternionic case, there are multiple
crystallographic integral sets in $\mathbb{O}$, nested by inclusion
and distinguished by the number of unit elements they contain.

The \emph{Cayley-Graves integers} $\mathbb{O}_{C_{8}}$ are the simplest
octonionic integral set, consisting of octonions with all-integer coefficients
in the standard basis $\left\{ 1,\mathrm{i},\mathrm{j},\mathrm{k},\mathrm{l},\mathrm{il},\mathrm{jl},\mathrm{kl}\right\} $.
They possess 16 unit elements forming the Moufang loop $M_{16}(Q_{8})$,
and their lattice is $C_{8}$.

The \emph{Compounded Eisenstein integers} $\mathbb{O}_{4A_{2}}$ are
obtained by embedding four copies of the Eisenstein integers into $\mathbb{O}$
using compatible quaternionic subalgebras. They possess 24 unit elements
forming the loop $M_{24}(4A_{2})$, and their lattice is $A_{2}\oplus A_{2}\oplus A_{2}\oplus A_{2}$.

The \emph{Coupled Hurwitz integers} $\mathbb{O}_{2D_{4}}$ are obtained
by coupling two copies of the Hurwitz quaternionic integers in a compatible
manner. They possess 48 unit elements forming the loop $M_{48}(2D_{4})$,
and their lattice is $D_{4}\oplus D_{4}$.

Finally, the maximal integral set is the \emph{Coxeter-Dickson octonions}
$\mathbb{O}_{E_{8}}$, to which we now turn. As we will see, this
is the unique octonionic integral system whose lattice is the exceptional
$E_{8}$, and it is the one against which the para-octonionic and Okubo
products will be tested.

For our purposes, the integral set called Coxeter-Dickson octonions,
and denoted here as $\mathbb{O}_{E_{8}}$, is of special importance.
Following \cite{Co46}, we define as the octonionic elements $\mathbb{O}_{E_{8}}$,
those of the form

\begin{equation}
x=a_{0}+a_{1}\text{i}+a_{2}\text{j}+a_{3}\text{k}+a_{4}\text{h}+a_{5}\text{ih}+a_{6}\text{jh}+a_{7}\text{kh},
\end{equation}
 where $\text{h}=\left(\text{i}+\text{j}+\text{k}+\text{l}\right)/2$
and $a_{0},...,a_{7}\in\mathbb{Z}$. 

To see that this is indeed a set of integral elements under octonionic
conjugation and multiplication we have first to show that the norm
and the trace are integers. Indeed, a straightforward calculation
yields to an explicit form for the norm 
\begin{align}
n\left(x\right) & =a_{0}\left(a_{0}-a_{5}-a_{6}-a_{7}\right)+a_{1}\left(a_{1}+a_{4}+a_{6}\right)+\\
 & +a_{2}\left(a_{2}+a_{4}+a_{5}\right)+a_{3}\left(a_{3}+a_{4}\right)+a_{4}^{2}+a_{5}^{2}+a_{6}^{2}+a_{7}^{2},
\end{align}
and for the trace
\begin{equation}
tr\left(x\right)=2a_{0}-a_{5}-a_{6}-a_{7},
\end{equation}
thus showing that $n\left(x\right),tr\left(x\right)\in\mathbb{Z}$.
The unit invertible elements of the set are the 240 elements 

\begin{equation}
\begin{array}{cc}
\pm1,\pm\text{i},\pm\text{j},\pm\text{k}, & \pm\text{l},\pm\text{i\text{l}},\pm\text{j\text{l}},\pm\text{k\text{l}},\\
\frac{1}{2}\left(\pm1\pm\text{j}\pm\text{k}\pm\text{i\text{l}}\right), & \frac{1}{2}\left(\pm\text{i}\pm\text{l}\pm\text{j\text{l}}\pm\text{k\text{l}}\right),\\
\frac{1}{2}\left(\pm1\pm\text{k}\pm\text{i}\pm\text{j\text{l}}\right), & \frac{1}{2}\left(\pm\text{j}\pm\text{l}\pm\text{k\text{l}}\pm\text{i\text{l}}\right),\\
\frac{1}{2}\left(\pm1\pm\text{i}\pm\text{j}\pm\text{k\text{l}}\right), & \frac{1}{2}\left(\pm\text{k}\pm\text{l}\pm\text{i\text{l}}\pm\text{j\text{l}}\right),\\
\frac{1}{2}\left(\pm1\pm\text{i\text{l}}\pm\text{j\text{l}}\pm\text{k\text{l}}\right), & \frac{1}{2}\left(\pm\text{i}\pm\text{j}\pm\text{k}\pm\text{l}\right),\\
\frac{1}{2}\left(\pm1\pm\text{i}\pm\text{l}\pm\text{i\text{l}}\right), & \frac{1}{2}\left(\pm\text{j}\pm\text{k}\pm\text{j\text{l}}\pm\text{k\text{l}}\right),\\
\frac{1}{2}\left(\pm1\pm\text{j}\pm\text{l}\pm\text{j\text{l}}\right), & \frac{1}{2}\left(\pm\text{k}\pm\text{i}\pm\text{k\text{l}}\pm\text{i\text{l}}\right),\\
\frac{1}{2}\left(\pm1\pm\text{k}\pm\text{l}\pm\text{k\text{l}}\right), & \frac{1}{2}\left(\pm\text{i}\pm\text{j}\pm\text{i\text{l}}\pm\text{j\text{l}}\right),
\end{array}\label{eq:InvertibleElements}
\end{equation}
which identify the vertices of the Gosset polytope $4_{21}$. Those
are closed under octonionic multiplication (see Fig. \ref{fig:Fano-Plane}).
In fact, the elements in (\ref{eq:InvertibleElements}) form a well-known
Moufang loop, here denoted as $M_{240}\left(E_{8}\right)$. 

Finally, the isometry between the set of integral octonions $\mathbb{O}_{E_{8}}$
with the octonionic norm $n$ and the $\text{E}_{8}$-lattice is a
well-known result from a straightforward calculation and that can
be explicitly found in \cite[Sec. 9]{CoSm} or \cite{Koc89}. 
\begin{figure}
\begin{centering}
\includegraphics[scale=0.8]{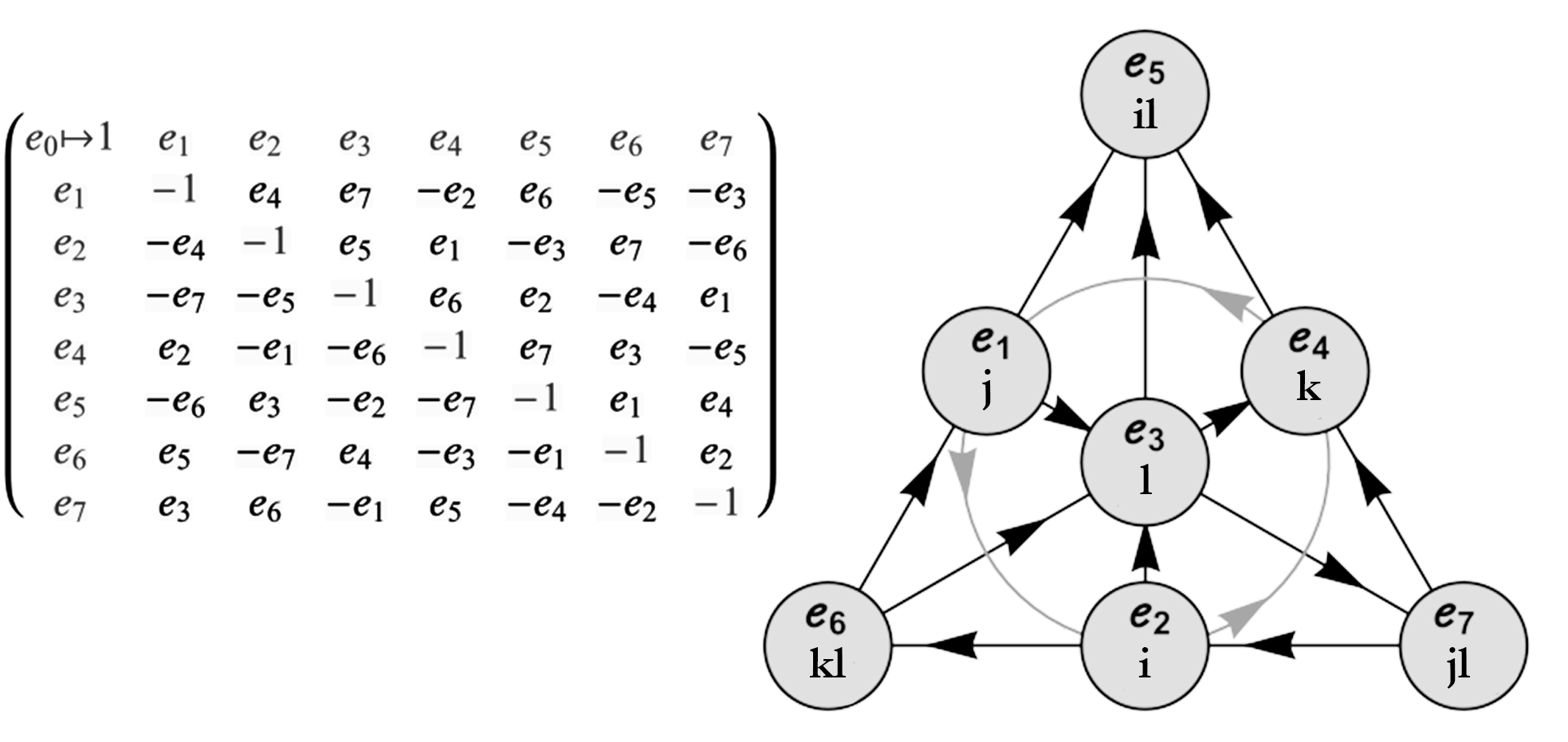}
\par\end{centering}
\caption{\emph{\label{fig:Fano-Plane}On the left}: octonionic multiplication
tables for the basis $\left\{ e_{0}=1,e_{1},e_{2},e_{3},e_{4},e_{5},e_{6},e_{7}\right\} $.
\emph{On the right}: a mnemonic representation on the Fano plane of
the same octonionic multiplication rule with the equivalence with
the Dickson notation $\left\{ 1,\text{i},\text{j},\text{k},\text{l},\text{il},\text{jl},\text{kl}\right\} $
according to \cite{Co46}.}
\end{figure}

\begin{rem}
\label{rem:shell-formula}The shell structure of $\mathbb{O}_{E_{8}}$
is captured by the remarkable formula
\begin{equation}
\left|\mathbb{O}_{E_{8}}(n)\right|=240\sum_{d|n}d^{3},\label{eq:shell-formula}
\end{equation}
where $\left|\mathbb{O}_{E_{8}}(n)\right|$ denotes the number of elements
of norm $n$ in $\mathbb{O}_{E_{8}}$. In particular, the 240 unit elements
($n=1$) are exactly the vertices of the Gosset polytope $4_{21}$.
This formula reflects the deep connection between $E_{8}$ and modular
forms: the right-hand side is proportional to the Eisenstein series $E_{4}$.
\end{rem}

\section{\noun{Octonions, paraoctonions and the real Okubo algebra}}

Lattices are often constructed by taking discrete subsets of unit
norm elements within an algebra and subsequently forming their integer
span. Consequently, possessing a quadratic norm that respects the
multiplicative structure of the algebra results in an intriguing interplay
between the lattice elements and their algebraic properties. Normed
algebras endowed with a norm that respect the multiplicative structure,
i.e., $n\left(x\cdot y\right)=n\left(x\right)n\left(y\right)$, are
called \emph{composition algebras}.

Composition algebras are divided in \emph{unital}, i.e. where exists
a unit element $1$ such that $x\cdot1=1\cdot x=x$, \emph{para-unital},
i.e. where exists an involution $x\longrightarrow\overline{x}$ and
an element called paraunit $\boldsymbol{1}$ such that $x\cdot\boldsymbol{1}=\boldsymbol{1}\cdot x=\overline{x}$,
and, finally, \emph{non-unital}, i.e. intended here for algebras that
possess neither a unit element nor a paraunit element. 
\begin{table}
\begin{centering}
\begin{tabular}{|c|c|c|c|c|c|c|c|c|c|c|c|c|}
\hline 
\textbf{Hurwitz} & \textbf{O.} & \textbf{C.} & \textbf{A.} & \textbf{Alt.} & \textbf{F.} &  & \textbf{p-Hurwitz} & \textbf{O.} & \textbf{C.} & \textbf{A.} & \textbf{Alt.} & \textbf{F.}\tabularnewline
\hline 
\hline 
$\mathbb{R}$ & Yes & Yes & Yes & Yes & Yes &  & $p\mathbb{R}\cong\mathbb{R}$ & Yes & Yes & Yes & Yes & Yes\tabularnewline
\hline 
$\mathbb{C}$, $\mathbb{C}_{s}$ & No & Yes & Yes & Yes & Yes &  & $p\mathbb{C}$, $p\mathbb{C}_{s}$ & No & Yes & No & No & Yes\tabularnewline
\hline 
$\mathbb{H}$,$\mathbb{H}_{s}$ & No & No & Yes & Yes & Yes &  & $p\mathbb{H}$,$p\mathbb{H}_{s}$ & No & No & No & No & Yes\tabularnewline
\hline 
$\mathbb{O}$,$\mathbb{O}_{s}$ & No & No & No & Yes & Yes &  & $p\mathbb{O}$,$p\mathbb{O}_{s}$ & No & No & No & No & Yes\tabularnewline
\hline 
\end{tabular}
\par\end{centering}
\centering{}{\small\bigskip{}
}\caption{\emph{\label{tab:Hurwitz-para-Hurwitz-1}On the left,} we have summarized
the algebraic properties, i.e. totally ordered (O), commutative (C),
associative (A), alternative (Alt), flexible (F), of all Hurwitz algebras,
namely $\mathbb{R},\mathbb{C},\mathbb{H}$ and $\mathbb{O}$ along
with their split counterparts $\mathbb{C}_{s},\mathbb{H}_{s},\mathbb{O}_{s}$.
\emph{On the right}, we have summarized the algebraic properties of
all para-Hurwitz algebras, namely $p\mathbb{R},p\mathbb{C},p\mathbb{H}$
and $p\mathbb{O}$ accompanied by their split counterparts $p\mathbb{C}_{s},p\mathbb{H}_{s},p\mathbb{O}_{s}$.}
\end{table}

In fact, as a consequence of the Generalized Hurwitz Theorem \cite{ElDuque Comp},
over the real numbers, the finite-dimensional composition algebras fall into exactly sixteen isomorphism classes: seven
are unital and called \emph{Hurwitz algebras}, comprising the division
algebras $\mathbb{R},\mathbb{C},\mathbb{H},\mathbb{O}$ along with
their split companions $\mathbb{C}_{s},\mathbb{H}_{s},\mathbb{O}_{s}$;
another seven are paraunital, closely related to the Hurwitz algebras
and termed \emph{para-Hurwitz algebras} (see Table \ref{tab:Hurwitz-para-Hurwitz-1});
finally, there are two composition algebras, one division and one
split, that are both non-unital and 8-dimensional, known as the \emph{Okubo
algebras} $\mathcal{O}$ and $\mathcal{O}_{s}$. 

In this section we review the three division and composition algebras
of dimension eight: the Hurwitz algebra of the octonions $\mathbb{O}$,
the para-Hurwitz algebra of para-octonions $p\mathbb{O}$ and, finally,
the Okubo algebra $\mathcal{O}$. The main objective of this section
is summarized in Table \ref{tab:Oku-Para-Octo} that synoptically
illustrates the relationships between the product of the three algebras.
The reason for limiting ourselves to the division algebras and not
consider their split companions it will be clear in the next section,
where we will require the closure under multiplication for
the elements of unit norm.

An important final remark: these three algebras are deeply linked
one another and it might be tempting to consider them equivalent in
some sort of sense. Nevertheless, it is crucial to note that although
is possible to switch from one algebra to another by altering the
product definition, none of these algebras are isomorphic to the others.
Specifically, the octonions $\mathbb{O}$ are alternative and unital,
para-octonions $p\mathbb{O}$ are neither alternative nor unital but do
have a para-unit, while the Okubo algebra $\mathcal{O}$ is non-alternative
and only has idempotents elements. It is also worth highlighting that
the Okubo algebra $\mathcal{O}$ is the least structured among these
algebras (for a summary of the property of these algebras see Table
\ref{tab:Synoptic-table-of}).
\begin{table}
\centering{}%
\begin{tabular}{|c|c|c|c|}
\hline 
Property & $\mathbb{O}$ & $p\mathbb{O}$ & $\mathcal{O}$\tabularnewline
\hline 
\hline 
Unital & Yes & No & No\tabularnewline
\hline 
Paraunital & Yes & Yes & No\tabularnewline
\hline 
Alternative & Yes & No & No\tabularnewline
\hline 
Flexible & Yes & Yes & Yes\tabularnewline
\hline 
Composition & Yes & Yes & Yes\tabularnewline
\hline 
Automorphism & $\text{G}_{2}$ & $\text{G}_{2}$ & $\text{SU}\left(3\right)$\tabularnewline
\hline 
\end{tabular}\caption{\label{tab:Synoptic-table-of}Synoptic table of the algebraic properties
of octonions $\mathbb{O}$, paraoctonions $p\mathbb{O}$ and the real
Okubo algebra $\mathcal{O}$.}
\end{table}

\subsection{The algebra of octonions}

The algebra of octonions $\mathbb{O}$ is the only division Hurwitz
algebra with a dimension of eight. We define the composition algebra
of octonion $\left(\mathbb{O},\cdot,n\right)$ as the eight dimensional
real vector space with basis $\left\{ \text{e}_{0}=1,\text{e}_{1},...,\text{e}_{7}\right\} $
with a bilinear product encoded through the \emph{Fano plane} and
explained in Fig. \ref{fig:Fano-Plane}. The resulting algebra is
non-associative, non-commutative, but alternative and thus flexible.
Given an element $x\in\mathbb{O}$ with decomposition 
\begin{equation}
x=x_{0}+\stackrel[k=1]{7}{\sum}x_{k}\text{e}_{k},
\end{equation}
the norm $n$ is the obvious Euclidean one defined by 
\begin{equation}
n\left(x\right)=x_{0}^{2}+x_{1}^{2}+x_{2}^{2}+x_{3}^{2}+x_{4}^{2}+x_{5}^{2}+x_{6}^{2}+x_{7}^{2},\label{eq:octonionic norm}
\end{equation}
 for which the conjugation results 
\begin{equation}
\overline{x}=x_{0}-\stackrel[k=1]{7}{\sum}x_{k}i_{k},
\end{equation}
 and therefore 
\begin{equation}
n\left(x\right)=\overline{x}\cdot x,\label{eq:n(x)=00003Dxcx}
\end{equation}
as it happens for every Hurwitz algebra. Then a look at (\ref{eq:octonionic norm})
shows that $n\left(x\right)=0$ if and only if $x=0$ and thus the
inverse of a non-zero element of the octonions is easily found as
\begin{equation}
x^{-1}=\frac{\overline{x}}{n\left(x\right)}.
\end{equation}
 Also, from (\ref{eq:n(x)=00003Dxcx}) we have that the octonionic
inner product is given by 
\begin{align}
\left\langle x,y\right\rangle  & =x\overline{y}+y\overline{x},\label{eq:octonionic inner}
\end{align}
so that $\left\langle x,x\right\rangle =2n\left(x\right)$.

\subsection{\label{subsec:Conjugation-and-the}The algebra of para-octonions}

In unital composition algebras, as noted earlier, there exists a canonical
involution, an order-two antihomomorphism known as \emph{conjugation}.
This can be defined using the orthogonal projection of the unit element
as
\begin{equation}
x\mapsto\overline{x}=\left\langle x,1\right\rangle 1-x.\label{eq:coniugazione}
\end{equation}
This canonical involution has the distinctive property of being an
antihomomorphism with respect to the product, i.e., $\overline{x\cdot y}=\overline{y}\cdot\overline{x},$
and the basic property with the norm of $x\cdot\overline{x}=n\left(x\right)1$. 

Given the order-two antihomomorphism of the conjugation over the Hurwitz
algebra of octonions $\left(\mathbb{O},\cdot,n\right)$ we can now
obtain a para-Hurwitz algebra defining a new product
\begin{equation}
x\bullet y=\overline{x}\cdot\overline{y},
\end{equation}
 for every $x,y\in\mathbb{O}$. The new algebra $\left(\mathbb{O},\bullet,n\right)$
is again a composition algebra, in fact a para-Hurwitz algebra, called
para-octonions and denoted with $p\mathbb{O}$. As expected the algebra
does not have a unit but only a para-unit, i.e. $1\in p\mathbb{O}$
such that $1\bullet x=\overline{x}.$ Finally, it is worth noting
that also the algebra of para-octonions is a division algebra since
if 
\begin{equation}
x\bullet y=\overline{x}\cdot\overline{y}=0,
\end{equation}
then either $\overline{x}$ or $\overline{y}$ are zero and thus implying
that either $x$ or $y$ are zero.

\subsection{The Okubo algebra}

The Petersson construction depends not only on the real automorphism, but also
on its position with respect to a chosen integral order. This distinction is
invisible over $\mathbb{R}$ and becomes essential over $\mathbb{Z}$. We begin
with the rotation presentation used in the our previous works such as \cite{CMZ24-MinimalCayley,CMZ25-Collineations,MCZ25-OkuboPhysics}. To
avoid confusion, we denote it by $\rho$:
\begin{equation}
\begin{array}{cc}
\rho\left(\text{e}_{k}\right) & =\text{e}_{k},k=0,1,3,7,\\
\rho\left(\text{e}_{2}\right) & =-\frac{1}{2}\left(\text{e}_{2}-\sqrt{3}\text{e}_{5}\right),\\
\rho\left(\text{e}_{5}\right) & =-\frac{1}{2}\left(\text{e}_{5}+\sqrt{3}\text{e}_{2}\right),\\
\rho\left(\text{e}_{4}\right) & =-\frac{1}{2}\left(\text{e}_{4}-\sqrt{3}\text{e}_{6}\right),\\
\rho\left(\text{e}_{6}\right) & =-\frac{1}{2}\left(\text{e}_{6}+\sqrt{3}\text{e}_{4}\right).
\end{array}\label{eq:old-rho}
\end{equation}
This is an order-three automorphism of $\mathbb{O}$. It fixes the quaternion
subalgebra generated by $\{1,\text{e}_{1},\text{e}_{3},\text{e}_{7}\}$. This
subalgebra contains
$w_{\rho}=(-1+\sqrt{3}\text{e}_{1})/2$, with
$w_{\rho}^{2}+w_{\rho}+1=0$. Therefore
\cite[Theorem~5.1]{ElduqueOrder3} shows that its Petersson isotope is a real
Okubo algebra. There is no algebraic error in this construction. The problem is arithmetic: already
$\rho(\text{e}_{2})$ does not belong to the Cayley--Graves order. Moreover, an
explicit product of Coxeter--Dickson basis elements will show that its
Petersson product does not preserve $\mathbb{O}_{E_{8}}$ either.

The two real planes generated by $\{\text{e}_{2},\text{e}_{5}\}$ and
$\{\text{e}_{4},\text{e}_{6}\}$ explain the displayed coefficients. On both
planes $\rho$ acts as a rotation by $2\pi/3$, with the same matrix as
multiplication by the primitive cubic root
$\omega=-\frac{1}{2}+\frac{\sqrt{3}}{2}\mathrm{i}$.

For integral questions we use instead the following signed permutation of the
Fano basis:
\begin{equation}
\begin{array}{lll}
\tau(1)=1, & \tau(\text{e}_{3})=\text{e}_{3}, & \\
\tau(\text{e}_{1})=\text{e}_{2}, &
\tau(\text{e}_{2})=-\text{e}_{4}, &
\tau(\text{e}_{4})=-\text{e}_{1},\\
\tau(\text{e}_{5})=-\text{e}_{7}, &
\tau(\text{e}_{7})=\text{e}_{6}, &
\tau(\text{e}_{6})=-\text{e}_{5}.
\end{array}\label{eq:Tau(Octonions)}
\end{equation}
Direct inspection of the seven oriented lines of the Fano plane gives
$\tau(x\cdot y)=\tau(x)\cdot\tau(y)$, while the two signed three-cycles give
$\tau^{3}=\operatorname{id}$. Thus $\tau\in\mathrm{G}_{2}$ is an
order-three automorphism defined over $\mathbb{Z}$.

It remains to determine which symmetric composition algebra is obtained. Set
\begin{equation}
a=-\text{e}_{1}-\text{e}_{2}+\text{e}_{4},\qquad
b=-\text{e}_{5}+\text{e}_{6}+\text{e}_{7}.
\end{equation}
The fixed subalgebra is
\begin{equation}
\operatorname{Fix}(\tau)=
\operatorname{span}_{\mathbb{R}}\{1,\text{e}_{3},a,b\}.
\end{equation}
Indeed, the Fano multiplication gives
\begin{equation}
a^{2}=b^{2}=-3,\qquad ab=3\text{e}_{3}=-ba,
\end{equation}
so this fixed subalgebra is isomorphic to the quaternions. In addition,
$w=(-1+a)/2$ satisfies $w^{2}+w+1=0$. By the classification of Petersson
algebras associated with order-three automorphisms
\cite[Theorem~5.1]{ElduqueOrder3}, the corresponding isotope is an Okubo
algebra, and not a para-octonionic algebra. Since the norm is positive definite, it
is the compact real division Okubo algebra.

We can now define its Petersson product by
\begin{equation}
x*y=\tau\left(\overline{x}\right)\cdot\tau^{2}\left(\overline{y}\right),
\end{equation}
for every $x,y\in\mathbb{O}$. Note that the element $1$ is an idempotent, since
$1*1=1$, but it is not a unit or a paraunit. It is worth noting that this Okubo
algebra is a division algebra: if
\begin{equation}
x*y=\tau\left(\overline{x}\right)\cdot\tau^{2}\left(\overline{y}\right)=0,
\end{equation}
then either $\tau\left(\overline{x}\right)$ or $\tau^{2}\left(\overline{y}\right)$
are zero and since $\tau$ is a homomorphism, either $x$ or $y$
are zero.

Unlike the case of the para-octonions it is worth presenting here
an independent way of realising the Okubo algebra. In fact, the following
realisation was the one independently found by Okubo ten years after
Petersson. Following \cite{Okubo 1978} and \cite{Elduque Myung 90},
we define the real Okubo Algebra $\mathcal{O}$ as the set of three
by three Hermitian traceless matrices over the complex numbers $\mathbb{C}$
with the following bilinear product 
\begin{equation}
x*y=\mu\cdot xy+\overline{\mu}\cdot yx-\frac{1}{3}\text{Tr}\left(xy\right)I_{3},\label{eq:product Ok}
\end{equation}
where $\mu=\nicefrac{1}{6}\left(3+\text{i}\sqrt{3}\right)$ and the
juxtaposition is the ordinary associative product between matrices.
It is worth noting that (\ref{eq:product Ok}) can be seen as a modification
of the Jordanian product. Indeed, setting $\mu=\nicefrac{1}{2}$ and
neglecting the last term, we retrieve the usual Jordan product over
Hermitian traceless matrices, i.e.
\begin{equation}
x\circ y=\frac{1}{2}xy+\frac{1}{2}yx.
\end{equation}
Nevertheless, Hermitian traceless matrices are not closed under such
product, thus requiring the additional term $-\nicefrac{1}{3}\text{Tr}\left(xy\right)$
for the closure of the algebra. Indeed, setting in (\ref{eq:product Ok})
$\text{Im}\mu=0$, one retrieves from the traceless part of the exceptional
Jordan algebra $\mathfrak{J}_{3}\left(\mathbb{C}\right)$, whose derivation
Lie algebra is $\mathfrak{su}\left(3\right)$.

Analyzing (\ref{eq:product Ok}), it becomes evident that the resulting
algebra is neither unital, associative, nor alternative. Nonetheless,
$\mathcal{O}$ is a\emph{ flexible }algebra, i.e. 
\begin{equation}
x*\left(y*x\right)=\left(x*y\right)*x,
\end{equation}
which will turn out to be an even more useful property than alternativity
in the definition of the projective plane. Even though the Okubo algebra
is not unital, it does have idempotents, i.e. $e*e=e$, such as 
\begin{equation}
e=\left(\begin{array}{ccc}
2 & 0 & 0\\
0 & -1 & 0\\
0 & 0 & -1
\end{array}\right),\label{eq:idemp}
\end{equation}
that together with
\begin{equation}
\begin{array}{ccc}
\text{e}_{1}=\sqrt{3}\left(\begin{array}{ccc}
0 & 1 & 0\\
1 & 0 & 0\\
0 & 0 & 0
\end{array}\right), &  & \text{e}_{2}=\sqrt{3}\left(\begin{array}{ccc}
0 & 0 & 1\\
0 & 0 & 0\\
1 & 0 & 0
\end{array}\right),\\
\text{e}_{3}=\sqrt{3}\left(\begin{array}{ccc}
0 & 0 & 0\\
0 & 0 & 1\\
0 & 1 & 0
\end{array}\right), &  & \text{e}_{4}=\sqrt{3}\left(\begin{array}{ccc}
1 & 0 & 0\\
0 & -1 & 0\\
0 & 0 & 0
\end{array}\right),\\
\text{e}_{5}=\sqrt{3}\left(\begin{array}{ccc}
0 & -i & 0\\
i & 0 & 0\\
0 & 0 & 0
\end{array}\right), &  & \text{e}_{6}=\sqrt{3}\left(\begin{array}{ccc}
0 & 0 & -i\\
0 & 0 & 0\\
i & 0 & 0
\end{array}\right),\\
\text{e}_{7}=\sqrt{3}\left(\begin{array}{ccc}
0 & 0 & 0\\
0 & 0 & -i\\
0 & i & 0
\end{array}\right),
\end{array}\label{eq:definizione i ottonioniche}
\end{equation}
form a basis for $\mathcal{O}$ that has real dimension $8$. It is
worth noting that the choice of the idempotent $e$ as in (\ref{eq:idemp})
does not yield to any loss of generality for the subsequent development
of our work since all idempotents form a single orbit under the full
automorphism group $\mathrm{SU}(3)$ (see \cite[Thm.\ 20]{ElduQueAut});
the cyclic subgroup $\langle\tau\rangle\cong\mathbb{Z}_{3}$ alone does
not suffice to connect all idempotents. The choice of this special
basis is useful for the matrix realization, but its orthogonality with respect
to the norm in (\ref{eq:Norm-Ok}) is a separate Gram-matrix check and will not
be used as an arithmetic input below. Through a special bijective map between
Okubo algebra and octonions the elements of the basis
$\left\{ e,\text{e}_{1},...,\text{e}_{7}\right\} $ correspond to the octonionic
ones previously defined. 

In this realisation a direct way of defining the quadratic form $n$
over Okubo algebra, is through the following 
\begin{equation}
n\left(x\right)=\frac{1}{6}\text{Tr}\left(x^{2}\right),\label{eq:Norm-Ok}
\end{equation}
for every $x\in\mathcal{O}$. It is straightforward to see that the
\emph{norm} has signature $(8,0)$ and is a composition norm over
the Okubo algebra; the associated symmetric bilinear form $\langle x,y\rangle$
is associative in the sense $\langle x\cdot z,y\rangle=\langle x,z\cdot y\rangle$.

\subsection{Okubo algebra, octonions and para-octonions}

An important feature of the Okubo algebra $\mathcal{O}$ is its interplay
with the algebra of octonions $\mathbb{O}$. Indeed, octonions and
the Okubo algebra are linked one another in such a way that we can
easily pass from one to the other simply changing the definition of
the bilinear product over the vector space of the algebra. Let us
consider the Kaplansky's trick we introduced earlier and let us define
a new product over the Okubo algebra $\mathcal{O}$ as
\begin{equation}
x\cdot y=\left(e*x\right)*\left(y*e\right),
\end{equation}
where $x,y\in\mathcal{O}$ and $e$ is an idempotent of $\mathcal{O}$.
Given that $e*e=e$ and $n\left(e\right)=1$, the element $e$ acts
as a left and right identity, i.e. 
\begin{align}
x\cdot e & =e*x*e=n\left(e\right)x=x,\\
e\cdot x & =e*x*e=n\left(e\right)x=x.
\end{align}
Moreover, since Okubo algebra is a composition algebra, the same norm
$n$ enjoys the following relation 
\begin{equation}
n\left(x\cdot y\right)=n\left(\left(e*x\right)*\left(y*e\right)\right)=n\left(x\right)n\left(y\right),
\end{equation}
which means that $\left(\mathcal{O},\cdot,n\right)$ is a unital composition
algebra of real dimension $8$. Since it is also a division algebra,
then it must be isomorphic to that of octonions $\mathbb{O}$ as noted
by Okubo himself \cite{Okubo 1978,Okubo 78c}. On the other hand,
al already noticed, if we consider the order three automorphism of
the octonions in (\ref{eq:Tau(Octonions)}), the Okubo algebra is
then realised as a Petersson algebra from the octonions setting
\begin{align}
x*y & =\tau\left(\overline{x}\right)\cdot\tau^{2}\left(\overline{y}\right).
\end{align}
Note that (\ref{eq:Tau(Octonions)}) is formulated assuming the knowledge
of the octonionic product. Reading the same maps as Okubic maps we
then have the notable relation, i.e.

\begin{align}
\overline{x} & =\left(\left(x*e\right)*e\right)*e,\\
\tau\left(x\right) & =\left(\left(\left(x*e\right)*e\right)*e\right)*e,
\end{align}
so that, in fact, the two maps are linked one another, i.e.

\begin{align}
\tau\left(x\right) & =\overline{x}*e\\
\overline{x} & =\tau\left(e*x\right).
\end{align}
While these maps are intertwined, it is important to highlight their
distinct impacts on the algebra's structure. While $\tau$ is an automorphism
for both Okubo algebra $\mathcal{O}$ and octonions $\mathbb{O}$,
$\overline{x}$ do not respect the algebraic structure of the Okubo
algebra $\mathcal{O}$, since it is not an automorphism nor an anti-automorphism
with respect to the Okubo product, while it is an anti-homomorphism
over octonions $\mathbb{O}$.

The scenario with para-octonions, $p\mathbb{O}$ is more straightforward.
By definition, para-octonions are obtainable from octonions $\mathbb{O}$
through
\begin{equation}
x\bullet y=\overline{x}\cdot\overline{y},
\end{equation}
while, on the other hand, octonions $\mathbb{O}$ are obtainable from
para-octonions $p\mathbb{O}$ through the aid of the para-unit $1\in p\mathbb{O}$,
such that
\begin{align}
x\cdot y & =\left(1\bullet x\right)\bullet\left(y\bullet1\right)\\
 & =\overline{x}\bullet\overline{y}=x\cdot y.
\end{align}
The new algebra $\left(p\mathbb{O},\cdot,n\right)$ is again an eight-dimensional
composition algebra which is also unital and division and thus, for
Hurwitz theorem, isomorphic to that of octonions $\mathbb{O}$. Moreover,
since $\tau\left(\overline{x}\right)=\overline{\tau\left(x\right)}$,
we also have that the Okubic algebra is obtainable from the para-Hurwitz
algebra with the introduction of a Petersson-like product, i.e. 
\begin{equation}
x*y=\tau\left(x\right)\bullet\tau^{2}\left(y\right).
\end{equation}
We thus have that all algebras are obtainable one from the other as
summarized in Table \ref{tab:Oku-Para-Octo}. 
\begin{table}
\centering{}%
\begin{tabular}{|c|c|c|c|}
\hline 
Algebra & $\left(\mathcal{O},*,n\right)$ & $\left(p\mathbb{O},\bullet,n\right)$ & $\left(\mathbb{O},\cdot,n\right)$\tabularnewline
\hline 
\hline 
$x*y$ & $x*y$ & $\tau\left(x\right)\bullet\tau^{2}\left(y\right)$ & $\tau\left(\overline{x}\right)\cdot\tau^{2}\left(\overline{y}\right)$\tabularnewline
\hline 
$x\bullet y$ & $\tau^{2}\left(x\right)*\tau\left(y\right)$ & $x\bullet y$ & $\overline{x}\cdot\overline{y}$\tabularnewline
\hline 
$x\cdot y$ & $\left(e*x\right)*\left(y*e\right)$ & $\left(\boldsymbol{1}\bullet x\right)\bullet\left(y\bullet\boldsymbol{1}\right)$ & $x\cdot y$\tabularnewline
\hline 
\end{tabular}\caption{\label{tab:Oku-Para-Octo}In this table we see how to obtain the Okubo
product $*,$ the para-octonionic product $\bullet$ and the octonionic
product $\cdot$ from Okubo algebra $\left(\mathcal{O},*,n\right)$,
para-octonions $\left(p\mathbb{O},\bullet,n\right)$ and octonions
$\left(\mathbb{O},\cdot,n\right)$ respectively}
\end{table}
 Nonetheless, it is vital to note that while transitioning from one
algebra to another is feasible, these algebras are not isomorphic.
For example, while the octonions $\mathbb{O}$ are alternative and
unital, para-octonions $p\mathbb{O}$ are neither alternative nor unital
but do have a para-unit. In contrast, the Okubo algebra $\mathcal{O}$
is non-alternative and only contains idempotent elements.

\section{\noun{Integral systems for the three products}}
\label{sec:integral-systems}

The unit in Johnson's definition must be treated with care when the product is
changed. For the para-octonions the element $1$ is a paraunit, while for the
Petersson model of the Okubo algebra it is an idempotent. The norm and the
Euclidean lattice, on the other hand, do not change. This suggests the following
simple adaptation.

\begin{defn}
\label{def:A-basic-systemNonUnital}Let $A$ be an eight-dimensional real
composition algebra with product $\circ$, norm $n$, and a chosen nonzero
idempotent $e$. A \emph{$\mathbb{Z}$-integral system relative to $e$} is a free
$\mathbb{Z}$-module $L\subset A$ of rank eight such that:
\begin{enumerate}
\item $e\in L$ and $L$ is closed under $\circ$;
\item $n(x)$ and $\operatorname{tr}_{e}(x)=\langle x,e\rangle$ belong to
$\mathbb{Z}$ for every $x\in L$;
\item $L$ is a discrete Euclidean lattice for the bilinear form associated
with $n$.
\end{enumerate}
\end{defn}

The following observation reduces the closure problem to the arithmetic action
of the automorphism.

\begin{prop}[Closure criterion]
\label{prop:closure-criterion}Let $L$ be an octonionic $\mathbb{Z}$-order which
is stable under conjugation, and let $\sigma\in\operatorname{Aut}(\mathbb{O})$
satisfy $\sigma(L)=L$. Then $L$ is closed under both
\begin{equation}
x\bullet y=\overline{x}\cdot\overline{y},\qquad
x*_{\sigma}y=\sigma(\overline{x})\cdot\sigma^{2}(\overline{y}).
\end{equation}
If $1\in L$, the norm and relative trace conditions are the same as for the
original octonionic order.
\end{prop}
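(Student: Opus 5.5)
The argument is a direct closure check: write both new products as compositions of maps that each preserve $L$. Here an octonionic $\mathbb{Z}$-order $L$ means a full-rank $\mathbb{Z}$-lattice in $\mathbb{O}$ closed under the octonionic product. The hypotheses then give three maps $L\to L$: conjugation $x\mapsto\overline{x}$, the automorphism $\sigma$, and its iterate $\sigma^{2}=\sigma\circ\sigma$, which preserves $L$ because $\sigma(L)=L$ applied twice gives $\sigma^{2}(L)=L$.

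For the para-octonionic product, take $x,y\in L$. Then $\overline{x},\overline{y}\in L$ by stability under conjugation, and $\overline{x}\cdot\overline{y}\in L$ by multiplicative closure of the order. Hence $x\bullet y\in L$.

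For the Petersson product, again take $x,y\in L$. We have $\overline{x},\overline{y}\in L$, so $\sigma(\overline{x})\in L$ and $\sigma^{2}(\overline{y})\in L$. Their octonionic product therefore lies in $L$, so $x*_{\sigma}y\in L$. Bilinearity of both products is inherited from the octonionic product together with linearity of $\sigma$ and of conjugation. Consequently $L$ is a $\mathbb{Z}$-module closed under $\bullet$ and $*_{\sigma}$, and no further additive checks are needed.

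For the final sentence, the norm $n$ is the same quadratic form in all three algebras, since both the para-octonionic and the Petersson isotope are built on $(\mathbb{O},n)$. So $n(x)\in\mathbb{Z}$ on $L$ holds exactly as for the octonionic order. The chosen element is $e=1$, which lies in $L$ by assumption. It is the paraunit of $p\mathbb{O}$ and an idempotent for $*_{\sigma}$, because $\sigma(1)=1$ and $\overline{1}=1$ give $1*_{\sigma}1=1$.

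The relative trace is $\operatorname{tr}_{e}(x)=\langle x,1\rangle$. With the paper's normalization $\langle x,y\rangle=x\overline{y}+y\overline{x}$, this equals $x+\overline{x}=\operatorname{tr}(x)$, the octonionic trace. Thus the integrality conditions of Definition~\ref{def:A-basic-systemNonUnital} coincide with the classical ones, and discreteness of $L$ is unchanged because the underlying Euclidean lattice is the same.

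The only point needing care, and the expected main obstacle, is notational rather than mathematical. One must pin down what "octonionic $\mathbb{Z}$-order" means so that multiplicative closure is part of the hypothesis. One must also fix the normalization of $\langle\cdot,\cdot\rangle$ so that $\operatorname{tr}_{1}$ agrees with $x+\overline{x}$ and not with half of it. If the bilinear form were instead normalized as $\tfrac12(x\overline{y}+y\overline{x})$, I would note that integrality of $n$ on a lattice still forces $\langle x,1\rangle\in\tfrac12\mathbb{Z}$. I would then state the trace condition with the paper's normalization above, under which it coincides exactly with the octonionic one.
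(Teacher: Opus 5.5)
Your proof is correct and follows the paper's argument: stability of $L$ under conjugation, then under $\sigma$ and $\sigma^{2}$, then octonionic closure of $L$ give closure under both products, while the norm is unchanged and $\operatorname{tr}_{1}(x)=\langle x,1\rangle=x+\overline{x}$ is the usual trace. Your extra checks, that $1$ is an idempotent for $*_{\sigma}$ and that the normalization of $\langle\cdot,\cdot\rangle$ matches, only make explicit what the paper's short proof leaves implicit.
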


\begin{proof}
For $x,y\in L$, conjugation gives $\overline{x},\overline{y}\in L$. The
assumption on $\sigma$ then gives
$\sigma(\overline{x}),\sigma^{2}(\overline{y})\in L$, and octonionic closure
proves both assertions. All three products have the same composition norm.
Moreover, relative to the idempotent $1$, the trace is
$\operatorname{tr}_{1}(x)=\langle x,1\rangle$, which is the usual integral
trace functional on the orders considered below.
\end{proof}

\subsection{Why the old rotation does not work integrally}

Let $*_{\rho}$ denote the Petersson product associated with
(\ref{eq:old-rho}). Since $\rho(\text{e}_{2})$ has nonintegral coordinates,
the Cayley--Graves order is not $\rho$-stable. The same problem occurs for the
Coxeter--Dickson order, although it is less immediate.

Write $\text{h}=(\text{e}_{1}+\text{e}_{2}+\text{e}_{3}+\text{e}_{4})/2$ and
use the Coxeter--Dickson basis
\begin{equation}
\label{eq:coxeter-dickson-basis}
\begin{array}{llll}
b_{0}=1,&b_{1}=\text{e}_{1},&b_{2}=\text{e}_{2},&b_{3}=\text{e}_{3},\\
b_{4}=\text{h},&b_{5}=\text{e}_{1}\text{h},&
b_{6}=\text{e}_{2}\text{h},&b_{7}=\text{e}_{3}\text{h}.
\end{array}
\end{equation}
An exact calculation gives
\begin{align}
\label{eq:okubo-counterexample}
b_{0}*_{\rho}b_{2}={}&-\frac{3}{2}\sqrt{3}\,b_{0}
+\frac{1}{2}\sqrt{3}\,b_{1}
+\left(\frac{1}{2}-\frac{1}{2}\sqrt{3}\right)b_{2}\nonumber\\
&-\sqrt{3}\,b_{5}-\sqrt{3}\,b_{6}-\sqrt{3}\,b_{7}.
\end{align}
Thus $L_{E}*_{\rho}L_{E}\nsubseteq L_{E}$. This is why the old automorphism
does not work for the integral problem.

\begin{rem}
The conclusion is deliberately limited to the chosen presentation. The
automorphism $\rho$ is valid and $*_{\rho}$ gives rise to the compact real Okubo
algebra. Extending coefficients to $\mathbb{Z}[\sqrt{3}]$ or rescaling a basis
may produce orders adapted to $\rho$, but this does not prove that
$\sqrt{3}$ is intrinsic to Okubo integrality. The new automorphism
(\ref{eq:Tau(Octonions)}) removes this presentation-dependent obstruction.
\end{rem}

\subsection{Four orders preserved by the integral automorphism}

We now choose concrete representatives of the four octonionic integral systems
introduced in Sec.~1. The Cayley--Graves order is
\begin{equation}
L_{C}=\mathbb{Z}\{1,\text{e}_{1},\ldots,\text{e}_{7}\}.
\end{equation}
A compatible compounded Eisenstein order $L_{A}$ has basis
\begin{align}
p_{0}&=1,&
p_{1}&=\frac{-1+\text{e}_{5}-\text{e}_{6}-\text{e}_{7}}{2},\nonumber\\
p_{2}&=\frac{-\text{e}_{2}+\text{e}_{4}+\text{e}_{5}+\text{e}_{6}}{2},&
p_{3}&=\frac{-\text{e}_{1}+\text{e}_{2}-\text{e}_{3}-\text{e}_{4}}{2},\nonumber\\
p_{4}&=\frac{\text{e}_{2}+\text{e}_{3}+\text{e}_{5}+\text{e}_{7}}{2},&
p_{5}&=\frac{\text{e}_{1}-\text{e}_{2}-\text{e}_{3}-\text{e}_{4}}{2},\nonumber\\
p_{6}&=\frac{\text{e}_{1}+\text{e}_{2}+\text{e}_{6}-\text{e}_{7}}{2},&
p_{7}&=\frac{-\text{e}_{1}-\text{e}_{2}+\text{e}_{3}-\text{e}_{4}}{2}.
\label{eq:A2-four-basis}
\end{align}
A compatible coupled Hurwitz order $L_{D}$ has basis
\begin{equation}
\label{eq:D4-two-basis}
\begin{array}{llll}
d_{0}=1,&d_{1}=\text{e}_{1},&d_{2}=\text{e}_{2},&
d_{3}=\dfrac{1+\text{e}_{1}+\text{e}_{2}+\text{e}_{4}}{2},\\
d_{4}=\text{e}_{3},&d_{5}=\text{e}_{1}\text{e}_{3},&
d_{6}=\text{e}_{2}\text{e}_{3},&d_{7}=d_{3}\text{e}_{3}.
\end{array}
\end{equation}
Finally, $L_{E}=\mathbb{O}_{E_{8}}$ is the Coxeter--Dickson order with basis
(\ref{eq:coxeter-dickson-basis}). Their metric lattices are respectively
$C_{8}$, $A_{2}^{4}$, $D_{4}^{2}$, and $E_{8}$.

\begin{prop}[Preservation of the four integral systems]
\label{prop:four-integral-systems}The four orders $L_{C}$, $L_{A}$, $L_{D}$,
and $L_{E}$ are stable under the automorphism $\tau$ in
(\ref{eq:Tau(Octonions)}). Consequently, all four are closed under the
para-octonionic product and under the Okubo product
\begin{equation}
\label{eq:petersson-okubo-product}
x*y=\tau(\overline{x})\cdot\tau^{2}(\overline{y}).
\end{equation}
They give $\mathbb{Z}$-integral systems for the three composition algebras.
\end{prop}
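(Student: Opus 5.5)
The plan is to reduce everything to the Closure criterion (Proposition~\ref{prop:closure-criterion}), applied with $\sigma=\tau$. That proposition needs three inputs for each $L\in\{L_{C},L_{A},L_{D},L_{E}\}$: that $L$ is an octonionic $\mathbb{Z}$-order containing $1$, that $L$ is stable under conjugation, and that $\tau(L)=L$. The first two are classical. Each order is a ring containing $1$ with integral norm and trace. Conjugation satisfies $\overline{x}=\operatorname{tr}(x)1-x$ with $\operatorname{tr}(x)\in\mathbb{Z}$ and $1\in L$, so it preserves $L$ automatically. The real content is therefore the $\tau$-stability. Since $\tau$ is $\mathbb{Z}$-linear and $\tau^{3}=\operatorname{id}$, it suffices to prove $\tau(L)\subseteq L$; then $\tau^{2}(L)\subseteq L$ and $L=\tau^{3}(L)\subseteq\tau(L)$.

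To prove $\tau(L)\subseteq L$ I would check that $\tau$ sends each basis element of $L$ into $L$.

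\textbf{Case $L_{C}$.} This is immediate, because $\tau$ is a signed permutation of $\{1,\text{e}_{1},\ldots,\text{e}_{7}\}$.

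\textbf{Cases $L_{A}$, $L_{D}$, $L_{E}$.} These are sublattices $L_{C}\subset L\subset\frac12L_{C}$, so a cleaner criterion is available. An element $\frac12\sum c_{i}\text{e}_{i}$ with $c\in\mathbb{Z}^{8}$ lies in $L$ iff $c\bmod 2$ lies in a binary code $\mathcal{C}_{L}\subseteq\mathbb{F}_{2}^{8}$, provided $2L_{C}\subseteq 2L$. That proviso holds when $L_{C}\subseteq L$, which is true for $L_{D}$ and $L_{E}$. For $L_{A}$ the containment $L_{C}\subseteq L$ may fail, so there I would instead express each $\tau(p_{i})$ directly as an integer combination of the $p_{j}$ by solving an $8\times8$ integer linear system; equivalently, I would check that the transition matrix has integer entries.

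For $L_{D}$ and $L_{E}$, the action of $\tau$ on $\mathcal{C}_{L}$ is just the permutation of coordinates $1\to2\to4\to1$, $5\to7\to6\to5$, with $0$ and $3$ fixed, because signs disappear mod $2$. So I would compute the generators of $\mathcal{C}_{L}$ from the half-integral basis vectors and check that the code is invariant under this permutation:
\begin{itemize}
\item For $L_{D}$ the code contains the supports $\{0,1,2,4\}$ and $d_{3}\text{e}_{3}$. The first is visibly invariant.
\item For $L_{E}$ the support of $\text{h}$ is $\{1,2,3,4\}$, and the supports of $\text{e}_{1}\text{h}$, $\text{e}_{2}\text{h}$, $\text{e}_{3}\text{h}$ are read off the Fano plane. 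The code they generate should be the $[8,4,4]$ extended Hamming code, or a translate structure thereof, which is stable under any Fano collineation. I expect the relevant permutation to be one, since $\tau$ is an automorphism.
\end{itemize}

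Once stability holds, Proposition~\ref{prop:closure-criterion} gives closure under $\bullet$ and $*$. Since $1\in L$, integrality of $n$ and of $\operatorname{tr}_{1}$ is inherited from the octonionic order. The lattices are unchanged, so conditions (1)--(3) of Definition~\ref{def:A-basic-systemNonUnital} hold with $e=1$, which is an idempotent for all three products.

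The main obstacle is the bookkeeping for $L_{A}$ and $L_{E}$. Here the half-integral vectors must be computed with the correct Fano orientation, and an error in a single sign of $\text{e}_{i}\text{h}$ could spoil the code argument. I would first compute the $L_{E}$ code explicitly and verify that it is the Hamming code on coordinates $\{0,\dots,7\}$ whose complement-of-line structure matches the Fano plane. Invariance then follows because $\tau$ permutes lines. For $L_{A}$, the direct integer transition matrix is the safest route.
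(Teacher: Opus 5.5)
Your architecture is the paper's: Proposition~\ref{prop:closure-criterion} with $\sigma=\tau$, a check of $\tau(L)\subseteq L$ on a basis, $\tau^{3}=\operatorname{id}$ to upgrade to equality, and inheritance of norm and trace. For $L_{A}$ you do exactly what the paper does. That is necessary, since $L_{C}\subseteq L_{A}$ would force $[L_{A}:L_{C}]^{2}=2^{8}/3^{4}$.

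Your mod-$2$ reformulation for $L_{D}$ and $L_{E}$ is sound in principle. Both contain $L_{C}$ (e.g.\ $\text{e}_{4}=2d_{3}-d_{0}-d_{1}-d_{2}=2b_{4}-b_{1}-b_{2}-b_{3}$). Moreover, $\tau$ acts on $\tfrac12L_{C}/L_{C}\cong\mathbb{F}_{2}^{8}$ by $\pi=(1\,2\,4)(5\,7\,6)$. For $L_{D}$ the code is spanned by $\{0,1,2,4\}$ and by the support $\{3,5,6,7\}$ of $d_{7}=\tfrac12(\text{e}_{3}+\text{e}_{5}+\text{e}_{6}+\text{e}_{7})$. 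You left the latter uncomputed, but both are visibly $\pi$-stable.

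The gap is in $L_{E}$, the case that matters. Its code is \emph{not} the extended Hamming code of the multiplication Fano plane. That would be Kirmse's lattice, which is not closed under multiplication; avoiding it is exactly Coxeter's correction.

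The paper's data fix the multiplication lines: $d_{3}$ is a Hurwitz unit and $ab=3\text{e}_{3}$, which give the lines $\{1,2,4\},\{1,3,7\},\{1,5,6\},\{2,3,6\},\{2,5,7\},\{3,4,5\},\{4,6,7\}$. The supports of $\text{h},\text{e}_{1}\text{h},\text{e}_{2}\text{h},\text{e}_{3}\text{h}$ are then $\{1,2,3,4\},\{0,2,4,7\},\{0,1,4,6\},\{0,5,6,7\}$. The weight-four words of $\mathcal{C}_{E}$ through $0$ are $\{0\}\cup T$ for $T$ among $\{1,3,7\},\{2,3,6\},\{3,4,5\},\{1,2,5\},\{1,4,6\},\{2,4,7\},\{5,6,7\}$. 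Only the three triples through $3$ are multiplication lines. The other four are the transversals of the lines through $3$ that are \emph{not} multiplication lines.

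So the identification you planned to verify fails. The inference ``invariance follows because $\tau$ permutes lines'' is also invalid: a collineation moving the point $3$ sends $L_{E}$ to a different maximal order.

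The correct reason is that $\pi$ fixes $3$. It therefore permutes the lines through $3$ and maps the four multiplication lines not through $3$ among themselves. Hence it also preserves the four complementary transversals used by $\mathcal{C}_{E}$. Directly, $\pi$ sends the generators to $\{1,2,3,4\},\{0,1,4,6\},\{0,1,2,5\},\{0,5,6,7\}$, all of which lie in $\mathcal{C}_{E}$.

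With this repair your route is a genuine alternative to the paper's explicit matrices (\ref{eq:tau-on-E8-basis}). It reduces the $E_{8}$ check to four words mod $2$. It also shows that, among signed-permutation automorphisms, exactly those whose underlying collineation fixes $3$ preserve $L_{E}$. The paper's integer matrices, by contrast, certify stability directly without any code-theoretic input.
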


\begin{proof}
The displayed bases are closed under octonionic multiplication and conjugation;
this is the usual order property. For $L_{C}$, stability under $\tau$ follows
immediately because (\ref{eq:Tau(Octonions)}) is a signed permutation.
For the other three orders it can be checked without changing coefficient
rings. On the $A_{2}^{4}$ basis one obtains
\begin{equation}
\begin{array}{llll}
\tau(p_{0})=p_{0},&\tau(p_{1})=p_{1},&
\tau(p_{2})=-p_{4}-p_{5},&\tau(p_{3})=p_{5},\\
\tau(p_{4})=p_{6}+p_{7},&\tau(p_{5})=-p_{7},&
\tau(p_{6})=-p_{2},&\tau(p_{7})=-p_{3}.
\end{array}
\end{equation}
On the $D_{4}^{2}$ basis the action is
\begin{equation}
\begin{array}{ll}
\tau(d_{0})=d_{0},\quad \tau(d_{1})=d_{2},&
\tau(d_{2})=d_{0}+d_{1}+d_{2}-2d_{3},\\
\tau(d_{3})=d_{0}+d_{2}-d_{3},\quad \tau(d_{4})=d_{4},&
\tau(d_{5})=d_{6},\\
\tau(d_{6})=d_{4}+d_{5}+d_{6}-2d_{7},&
\tau(d_{7})=d_{4}+d_{6}-d_{7}.
\end{array}
\end{equation}
Finally, on the Coxeter--Dickson basis one has
\begin{align}
\tau(b_{0})&=b_{0},&\tau(b_{1})&=b_{2},&
\tau(b_{2})&=b_{1}+b_{2}+b_{3}-2b_{4},\nonumber\\
\tau(b_{3})&=b_{3},&\tau(b_{4})&=b_{2}+b_{3}-b_{4},&
\tau(b_{5})&=-2b_{1}-b_{2}-b_{3}+2b_{4}+b_{6},\nonumber\\
\tau(b_{6})&=b_{0}+b_{2}+b_{5}+b_{6}+b_{7},\nonumber\\
\tau(b_{7})&=-2b_{0}+2b_{1}+b_{2}+b_{3}\nonumber\\
&\quad -2b_{4}-2b_{6}-b_{7}.
\label{eq:tau-on-E8-basis}
\end{align}
All coefficients are integers. Since $\tau^{3}=\operatorname{id}$, these
inclusions are equalities. Proposition~\ref{prop:closure-criterion} now proves
closure for both products, while the norm and trace conditions are inherited
from the original octonionic orders.
\end{proof}

The metric information and a first multiplicative invariant are summarized in
Table~\ref{tab:four-integral-okubo-systems}. Here $L(1)$ denotes the set of
elements of norm one, and the last column counts the elements satisfying
$x*x=x$. The counts follow by solving the positive definite equation $n(x)=1$
in each displayed basis and testing the resulting finite set under the exact
product (\ref{eq:petersson-okubo-product}).
\begin{table}
\centering
\begin{tabular}{|c|c|c|c|c|}
\hline
Order & Metric lattice & Gram determinant & $|L(1)|$ & Okubo idempotents\\
\hline\hline
$L_{C}$ & $C_{8}$ & $2^{8}$ & $16$ & $4$\\
\hline
$L_{A}$ & $A_{2}^{4}$ & $3^{4}$ & $24$ & $3$\\
\hline
$L_{D}$ & $D_{4}^{2}$ & $4^{2}$ & $48$ & $9$\\
\hline
$L_{E}$ & $E_{8}$ & $1$ & $240$ & $12$\\
\hline
\end{tabular}
\caption{\label{tab:four-integral-okubo-systems}The four lattices preserved by
the integral Okubo product. The Gram form is $\langle x,x\rangle=2n(x)$.}
\end{table}

\section{\noun{The octonionic, para-octonionic and Okubo $E_{8}$ structures}}
\label{sec:three-E8-structures}

Proposition~\ref{prop:four-integral-systems} reveals a remarkable phenomenon on
$E_{8}$: one additive lattice $L_{E}$ carries three different products.

\begin{prop}[One lattice, three arithmetic algebras]
\label{prop:one-lattice-three-algebras}The identity map on $L_{E}$ is an
isometry among the octonionic, para-octonionic, and Okubo integral systems. In
particular, all three systems have the same Gram matrix, the same $240$ roots,
and the same Gosset polytope $4_{21}$. They are pairwise non-isomorphic as
algebras.
\end{prop}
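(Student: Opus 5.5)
The proof has two parts: the metric claims, which are essentially tautological, and the non-isomorphism claim, which needs invariants.

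\emph{Metric part.} All three products live on the same real vector space $\mathbb{O}$ with the same quadratic form $n$. This was noted in the proof of Proposition~\ref{prop:closure-criterion}: the para-octonionic and Petersson products are composition products for the octonionic norm. By Proposition~\ref{prop:four-integral-systems}, $L_{E}$ is closed under $\cdot$, $\bullet$ and $*$, so it is an integral system for each product. The identity map of $L_{E}$ therefore preserves $n$, hence the polar form $\langle x,y\rangle=n(x+y)-n(x)-n(y)$, and hence the Gram matrix in the basis (\ref{eq:coxeter-dickson-basis}). The $240$ roots are the solutions of $n(x)=1$ in $L_{E}$. This set depends only on $(L_{E},n)$, so it is the same in all three structures and equals the list (\ref{eq:InvertibleElements}). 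Its convex hull is the Gosset polytope $4_{21}$, as recalled in Section~1 and Remark~\ref{rem:shell-formula}.

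\emph{Non-isomorphism.} I will distinguish the three structures by invariants that any isomorphism must preserve. This works both for the rational or real algebras $L_{E}\otimes\mathbb{R}$ and for the $\mathbb{Z}$-algebras $L_{E}$, since a ring isomorphism of $\mathbb{Z}$-algebras extends $\mathbb{R}$-linearly.

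\begin{enumerate}
\item $(L_{E},\cdot)$ has a two-sided unit $1$. I will show that $(L_{E},\bullet)$ and $(L_{E},*)$ have none. Suppose $u$ is a unit for $\bullet$. Then $\overline{u}\cdot\overline{x}=x$ for all $x$; taking $x=1$ gives $\overline{u}=1$. Then $\overline{x}=x$ for all $x$, which is false. For $*$, a two-sided unit would make the real Okubo algebra unital, contradicting Table~\ref{tab:Synoptic-table-of}. Concretely, $u*x=x$ gives $\tau(\overline{u})\cdot\tau^{2}(\overline{x})=x$ for all $x$. Setting $x=1$ forces $\tau(\overline{u})=1$, so $u=1$. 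Then $\tau^{2}(\overline{x})=x$ for all $x$, which fails at $x=\text{e}_{1}$.
\item To separate $\bullet$ from $*$, I will use the paraunit. The element $1$ satisfies $1\bullet x=x\bullet1=\overline{x}$, where $x\mapsto\overline{x}=\langle x,1\rangle1-x$ is determined by $1$ and the norm. I will argue that $(L_{E},*)$ has no element $p$ with $p*x=x*p=\langle x,p\rangle p-x$ for all $x$. The cleanest route is the invariant the paper already cites: the real isotopes are non-isomorphic, since their automorphism groups are $\mathrm{G}_{2}$ and $\mathrm{SU}(3)$ (Table~\ref{tab:Synoptic-table-of}). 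This identification rests on \cite[Theorem~5.1]{ElduqueOrder3}, as established for $\tau$.
\item A purely arithmetic check is also available. Count idempotents among the roots. For $\bullet$, $x\bullet x=x$ with $n(x)=1$ gives $\overline{x}^{2}=x$. I will solve this in the $240$ roots and compare the count with the $12$ Okubo idempotents of Table~\ref{tab:four-integral-okubo-systems}. For $\cdot$, the only idempotent root is $1$.
\end{enumerate}

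The main obstacle is making item 3 rigorous without relying on computation. For unit-norm $x$, $\overline{x}^{2}=x$ implies $\overline{x}=x^{2}$, so $\overline{x}\cdot x=x^{3}=1$. Then $x$ is a cube root of unity: either $x=1$ or $x=(-1\pm v)/2$ with $v$ purely imaginary and $n(v)=3$. Counting such roots in $L_{E}$ means counting norm-$3$ imaginary vectors $v$ with $(-1+v)/2\in L_{E}$. These are exactly the elements with $\operatorname{tr}=-1$, so I expect a large count, much more than $12$. This count suffices to distinguish $\bullet$ from $*$. If it proves too tedious, I will fall back on the automorphism-group argument in item 2.
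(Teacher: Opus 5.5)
Your proof is correct, and its metric half is exactly the paper's argument. The three products share the norm $n$, hence the polar form, the Gram matrix, and the $240$ norm-one vectors of $L_{E}$.

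The non-isomorphism half differs from the paper's. The paper only lists properties (unital and alternative for $\cdot$; paraunit $1$ but no unit for $\bullet$; neither unit nor paraunit for $*$) and declares them isomorphism-invariant. You go further in two places that the paper leaves implicit. You actually verify that $\bullet$ and $*$ have no unit, by testing $x=1$ and then $x=\text{e}_{1}$. You also note that a $\mathbb{Z}$-isomorphism of integral systems extends to an $\mathbb{R}$-isomorphism of $L_{E}\otimes\mathbb{R}=\mathbb{O}$.

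To separate $\bullet$ from $*$, you replace the paraunit by $\mathrm{G}_{2}$ versus $\mathrm{SU}(3)$, or by the idempotent count $57$ versus $12$. The paraunit and automorphism-group arguments cost the same. Each needs $(\mathbb{O},*)$ to be a genuine Okubo algebra, that is, Elduque's Theorem~5.1 applied to $\tau$. Each then separates the real isotopes as well as their integral forms.

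The idempotent count avoids that classification, but it has two limitations. It depends on the computed value $12$. It also separates only the integral systems, since over $\mathbb{R}$ both isotopes have infinitely many idempotents.

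If you use the count, add two small points:
\begin{itemize}
\item State why it is an invariant. A nonzero idempotent satisfies $n(e)=n(e)^{2}$, so it is automatically one of the $240$ roots, and any isomorphism carries idempotents bijectively to idempotents.
\item Replace ``I expect a large count'' by the standard $E_{8}$ fact that exactly $56$ roots have inner product $-1$ with the root $1$, since $240=1+56+126+56+1$. Together with $1$ this gives the paper's $57$ with no computation.
\end{itemize}
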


\begin{proof}
The para-Hurwitz and Petersson constructions change only the multiplication.
Their norm is the original octonionic norm, so the additive group and the Gram
form are exactly the same in all three cases. Hence the vectors of norm one are
the same $240$ vectors listed in (\ref{eq:InvertibleElements}). On the other
hand, the octonionic product is unital and alternative; the para-octonionic
product is non-unital and non-alternative, but has the paraunit $1$; and the
Okubo product is neither unital nor para-unital and is non-alternative. These
properties are invariant under algebra isomorphisms.
\end{proof}

The difference can already be seen on the common root set. With octonionic
multiplication the $240$ roots form the Moufang loop $M_{240}(E_{8})$ and $1$
is its identity. With the para-octonionic multiplication the same set is the
para-isotope of this Moufang loop: it has the paraunit $1$, satisfying
$1\bullet x=x\bullet1=\overline{x}$, but it has no identity. With the Okubo
multiplication the same finite set is again closed, but has neither an identity
nor a paraunit. In both non-unital cases the root set is a quasigroup. Indeed,
for a fixed root $u$, left and right multiplication by $u$ are injective because
the corresponding real composition algebra is a division algebra; on a finite
set they are therefore bijective.

The idempotents give a simple numerical distinction. The octonionic division
algebra has only the nonzero idempotent $1$. Among the common $240$ roots, $57$
are para-octonionic idempotents: besides $1$, they are the $56$ integral roots
of the form
\begin{equation}
x=-\frac{1}{2}+v,\qquad v\in\operatorname{Im}(\mathbb{O}),\qquad
n(v)=\frac{3}{4}.
\end{equation}
For the Okubo product there are $12$ integral idempotents. Thus the geometry is
unchanged, but the three multiplications organise its vertices in different
ways.

There is also a difference between continuous and arithmetic symmetries. The
real octonion and para-octonion algebras both have automorphism group
$\mathrm{G}_{2}$, whereas the real Okubo algebra has automorphism group
$\mathrm{SU}(3)$. These full compact groups do not preserve a chosen $E_{8}$
order in general. For the present embeddings the relevant finite groups are
\begin{align}
G_{E}^{\mathrm{oct}}=G_{E}^{p\mathrm{oct}}
&=\{g\in\mathrm{G}_{2}:g(L_{E})=L_{E}\},\\
G_{E}^{\mathrm{Ok}}&=\{g\in\mathrm{SU}(3):g(L_{E})=L_{E}\}.
\end{align}
The first equality follows because octonion automorphisms commute with
conjugation and therefore preserve the para-product. Thus the octonionic and
para-octonionic arithmetic stabilizers coincide.

Every octonion automorphism preserving $L_E$ and commuting with $\tau$
preserves the Petersson product. Consequently,
\begin{equation}
C_{G_{E}^{\mathrm{oct}}}(\tau)\subseteq G_{E}^{\mathrm{Ok}},
\end{equation}
and $\tau$ itself gives a nontrivial element of the Okubo arithmetic stabilizer.
Determining whether this inclusion accounts for the complete stabilizer is a
separate finite group problem.

\begin{rem}
The rotation $\rho$ and the integral automorphism $\tau$ both give real Okubo
algebras, but they define different arithmetic embeddings with respect to the
standard octonionic orders. For $\rho$ one sees $\sqrt{3}$ and loses the four
lattices; for $\tau$ the matrices are integral and all four lattices are
preserved. The algebraic isomorphism class alone therefore does not determine
the integral order.
\end{rem}

\section{\noun{Conclusions and Future Developments}}\label{sec:conclusions}

In this work we have shown that the obstruction found in the old Okubo
presentation was not intrinsic. The rotation automorphism $\rho$ is
algebraically correct and produces the compact real Okubo algebra, but it does
not preserve the classical integral octonion orders. The signed automorphism
$\tau$ is again of order three, has a quaternionic fixed subalgebra, and its
Petersson isotope is therefore an Okubo algebra. Unlike $\rho$, it is defined
over $\mathbb{Z}$ and preserves the four systems $C_{8}$, $A_{2}^{4}$,
$D_{4}^{2}$, and $E_{8}$.

The most important consequence is that the Coxeter--Dickson lattice is itself
an integral order for all three products. The octonionic, para-octonionic, and
Okubo $E_{8}$ structures have the same Euclidean lattice, the same $240$ roots,
and the same Gosset polytope. They differ only after multiplication is
introduced: the octonionic roots form a unital Moufang loop, the
para-octonionic roots form a quasigroup with a paraunit and $57$ idempotents,
and the Okubo roots form a quasigroup with neither unit nor paraunit and $12$
idempotents.

The main similarities and differences are summarized in
Table~\ref{tab:conclusion-three-algebras}.
\begin{table}[!ht]
\centering
\small
\begin{tabular}{|>{\raggedright\arraybackslash}p{2.7cm}|
>{\raggedright\arraybackslash}p{3.15cm}|
>{\raggedright\arraybackslash}p{3.15cm}|
>{\raggedright\arraybackslash}p{3.15cm}|}
\hline
Property & Octonions & Para-octonions & Okubo algebra\\
\hline\hline
Underlying lattice & $E_{8}$ & $E_{8}$ & $E_{8}$\\
\hline
Product & $x\cdot y$ & $\overline{x}\cdot\overline{y}$ &
$\tau(\overline{x})\cdot\tau^{2}(\overline{y})$\\
\hline
Unit structure & Unit $1$ & Paraunit $1$ & Neither unit nor paraunit\\
\hline
Algebraic laws & Alternative and flexible & Non-alternative and flexible &
Non-alternative and flexible\\
\hline
Real automorphism group & $\mathrm{G}_{2}$ & $\mathrm{G}_{2}$ &
$\mathrm{SU}(3)$\\
\hline
Norm-one roots & Moufang loop $M_{240}(E_{8})$ & Para-isotope of
$M_{240}(E_{8})$ & Okubo quasigroup\\
\hline
Integral idempotents in $E_{8}$ & $1$ & $57$ & $12$\\
\hline
\end{tabular}
\caption{\label{tab:conclusion-three-algebras}Comparison of the three integral
algebra structures carried by the same $E_{8}$ lattice.}
\end{table}

It would be interesting to determine the full group $G_{E}^{\mathrm{Ok}}$, its
orbits on the $240$ roots and on the $12$ idempotents, and its relation with the
centralizer of $\tau$ in the octonionic arithmetic group. A systematic study of
other order-three automorphisms preserving maximal octonion orders may also
clarify how many non-isomorphic integral Okubo products can live on the same
$E_{8}$ lattice.
\section{\noun{Acknowledgments}}

The author thanks Alessio Marrani and Francesco Zucconi for discussions and
for the stimulating questions on Okubo algebras. He also thanks Raymond
Aschheim for discussions on integral numbers, and Richard Clawson, David
Chester, and Klee Irwin for discussions on lattices and root systems.

\end{document}